%------------------------------------------------------------------------------
% Beginning of journal.tex
%------------------------------------------------------------------------------
%
% AMS-LaTeX version 2 sample file for journals, based on amsart.cls.
%
%        ***     DO NOT USE THIS FILE AS A STARTER.      ***
%        ***  USE THE JOURNAL-SPECIFIC *.TEMPLATE FILE.  ***
%
% Replace amsart by the documentclass for the target journal, e.g., tran-l.
%
\documentclass{amsart}

\usepackage{mathrsfs}
\usepackage{mathrsfs}
\usepackage{amsfonts}
\usepackage{amsfonts}
\usepackage{mathrsfs}
\usepackage{amsfonts}
\usepackage{amsfonts}
\usepackage{amsfonts}
\usepackage{mathrsfs}
\usepackage{amsfonts}
\usepackage{amssymb,amsmath}
\usepackage{amsthm}
\usepackage{amsfonts}

\newtheorem{theorem}{Theorem}[section]
\newtheorem{lemma}[theorem]{Lemma}

\theoremstyle{definition}
\newtheorem{definition}[theorem]{Definition}

\theoremstyle{remark}

\numberwithin{equation}{section}

%    Absolute value notation

%    Blank box placeholder for figures (to avoid requiring any
%    particular graphics capabilities for printing this document).

\begin{document}

\title[Maximum Principle]
 {Maximum principles for a time-space fractional diffusion equation}

%    Information for first author
\author[J.X.Jia]{Junxiong Jia}
%    Address of record for the research reported here
\address{Department of Mathematics,
Xi'an Jiaotong University,
 Xi'an
710049, China; }
%    Current address
\email{jjx323@mail.xjtu.edu.cn}

%\author[J. Peng]{Jigen Peng}
%%    Address of record for the research reported here
%\address{Department of Mathematics,
%Xi'an Jiaotong University,
% Xi'an
%710049, China; Beijing Center for Mathematics and Information Interdisciplinary Sciences (BCMIIS);}
%%    Current address
%\email{jgpen@mail.xjtu.edu.cn}

\author[K.X. Li]{Kexue Li}
\address{Department of Mathematics,
Xi'an Jiaotong University,
 Xi'an
710049, China;}
%    Current address
\email{kexueli@gmail.com}
%%    \thanks will become a 1st page footnote.
%\thanks{}

%    General info
\subjclass[2010]{35R11, 35B50, 34A08}

%%\date{January 1, 2001 and, in revised form, June 22, 2001.}

%%\dedicatory{This paper is dedicated to our advisors.}

\keywords{Time-space fractional diffusion equation, Maximum principle, Fractional derivative}

\begin{abstract}
In this paper, we focus on maximum principles of a time-space fractional diffusion equation.
Maximum principles for classical solution and weak solution are all obtained by using properties
of the time fractional derivative operator and the fractional Laplace operator.
We deduce maximum principles for a full fractional diffusion equation, other than time-fractional and spatial-integer order
diffusion equations.
\end{abstract}

\maketitle

%%%%%%%%%%%%%%%%%%%%%%%%%%%%%%%%%%%%%%%%%%%%%%%%%%%%%%%%%%%%%%%%%%%%%%%%%%%%%%%%%%%%%%%%%%%%%%%%%%%%%%%%%%%%%%%%%%%%%%%%%%%%%%%%%%%%%%%%%%%%%%%%%%%%%%%%%%%

\section{Introduction}

In this paper, we focus on the following time-space fractional diffusion equation
\begin{align}\label{TimeSpaceEquation1}
\left\{\begin{aligned}
\partial_{t}^{\alpha}(u(x,t)-u_{0}(x)) + (-\Delta)^{\beta}u(x,t) & = f(x,t) \quad \text{in }\Omega\times[0,\infty), \\
u(x,t) & = 0 \quad\quad\quad\, \text{in }\mathbb{R}^{N}\backslash\Omega, \, t\geq 0, \\
u(x,0) & = u_{0}(x) \quad\,\, \text{in }\Omega,
\end{aligned}\right.
\end{align}
where $\Omega \subset\mathbb{R}^{N}$($N \geq 1$) is a bounded domain in $N$-dimensional space,
$\alpha, \beta \in (0,1)$ and $\partial_{t}^{\alpha}\cdot$ represents the Riemann-Liouville time-fractional derivative
defined as follow
\begin{align}\label{CaputoDefinition1}
\partial_{t}^{\alpha}v(t) := \frac{d}{dt}(g_{1-\alpha}*v(\cdot))(t),
\end{align}
with $g_{\gamma}(t) = \frac{t^{\gamma - 1}}{\Gamma(\gamma)}$ and ``$*$'' represents usual convolution operator.
The fractional Laplace operator could be defined as follow
\begin{align}\label{DefinitionFractionalLaplace}
(-\Delta)^{\beta}v(x) = c_{N,\beta}\int_{\mathbb{R}^{n}}
\frac{v(x) - v(y)}{|x - y|^{N+2\beta}}dy,
\end{align}
with $c_{N,\beta} = \frac{\beta 2^{2\beta}\Gamma(\frac{N+2\beta}{2})}{\pi^{N/2}\Gamma(1-\beta)}$ and $\Gamma(\cdot)$ represents the
usual Gamma function. For more properties about fractional Laplace operator, we refer to \cite{CPA:CPA20153}.

There are much research about maximum principles for equation (\ref{TimeSpaceEquation1})
when $\beta = 1$ \cite{Luchko2009218,al2014maximum},
which is a time fractional diffusion equation. In the fractional elliptic partial differential equation field,
there are also lots of research about maximum principles e.g. \cite{grecohopf}.
Recently, some maximum principles for the time fractional diffusion equations
have been applied to inverse source problems in \cite{Luchko2013Rundell}.

Although maximum principles are important tools, to the best of our knowledge, there are few results about
maximum principles for equation (\ref{TimeSpaceEquation1}) when $\alpha$, $\beta$ are both
non-integers. In this paper, we prove weak maximum principles for classical and weak solutions of full fractional
diffusion equation (\ref{TimeSpaceEquation1}) which may provide important tools for other research.

\textbf{Notations}: In the sequel, $W^{k,p}$ denotes the usual Sobolev spaces with derivative $k$ and Lebesgue exponent $p$;
$C^{k}$ denotes $k$ times differentiable function spaces.

\section{Fundamental Identity of the Time Fractional Derivative}

In the following proof, we need an important formula which could be found in \cite{Zacher2008137}
that is for a sufficiently smooth function $u$ on $(0,T)$ one has for a.e. $t\in (0,T)$,
\begin{align}\label{FundamentalIdentity}
\begin{split}
& H'(u(t))\frac{d}{dt}(k*u)(t) = \frac{d}{dt}(k*H(u))(t) + (-H(u(t))+H'(u(t))u(t))k(t) \\
& \quad
+ \int_{0}^{t}(H(u(t-s))-H(u(t))-H'(u(t))[u(t-s)-u(t)])\left(-\frac{dk(s)}{ds}\right)ds,
\end{split}
\end{align}
where $H\in C^{1}(\mathbb{R})$ and $k\in W^{1,1}([0,T])$. Denote $y^{+} = \max\{y,0\}$ and $y^{-}=\max\{-y,0\}$.
Now, taking $H(y)=\frac{1}{2}(y^{+})^{2}$,
for any function $u \in L^{2}([0,T])$, there will be a direct corollary of the above formula
\begin{align}\label{AppendexTimeFractional1}
u(t)^{+}\frac{d}{dt}(k*u)(t) \geq \frac{1}{2}\frac{d}{dt}(k*(u^{+})^{2}), \quad \text{a.e. }t\in(0,T).
\end{align}
Denote $v = -u$ and for $v$, we could also obtain
\begin{align}\label{AppendexTimeFractional2}
v(t)^{+}\frac{d}{dt}(k*v)(t) \geq \frac{1}{2}\frac{d}{dt}(k*(v^{+})^{2}), \quad \text{a.e. }t\in(0,T).
\end{align}
Now replacing $u$ back into (\ref{AppendexTimeFractional2}), we find that
\begin{align}\label{AppendexTimeFractional3}
u(t)^{-}\frac{d}{dt}(k*u)(t) \leq -\frac{1}{2}\frac{d}{dt}(k*(u^{-})^{2}), \quad \text{a.e. }t\in(0,T).
\end{align}

\section{Maximum Principle for Classical Solution}

In this section, firstly, let us introduce a lemma which could easily be obtained by using Theorem 1 in \cite{Luchko2009218}
and formula (1.20) in \cite{bajlekova2001fractional}.
\begin{lemma}\label{CaputoMaximumLemma}
Let a function $f \in W^{1,1}((0,T)) \cap C([0,T])$ attain its maximum (minimum) over the interval $[0,T]$ at the point
$\tau = t_{0}$, $t_{0}\in (0,T]$. Then the Riemann-Liouville fractional derivative of the function $f(\cdot) - f(0)$ is non-negative (non-positive)
at the point $t_{0}$ for any $\alpha$, $0<\alpha<1$,
\begin{align*}
\partial_{t}^{\alpha}(f(t_{0})-f(0)) \geq 0, \quad (\partial_{t}^{\alpha}(f(t_{0})-f(0)) \leq 0), \quad 0<\alpha < 1.
\end{align*}
\end{lemma}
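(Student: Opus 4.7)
The plan is to reduce the claim to the already-established maximum principle for the Caputo derivative, namely Theorem 1 of Luchko \cite{Luchko2009218}, which states that if a sufficiently regular $f$ attains its maximum over $[0,T]$ at an interior or right-endpoint point $t_0$, then the Caputo derivative $D^{\alpha}_{C}f(t_{0})\geq 0$. The only gap to bridge is therefore the identification of the Riemann--Liouville derivative of the shifted function $f(\cdot)-f(0)$ with the Caputo derivative of $f$.

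First I would invoke formula (1.20) of Bajlekova \cite{bajlekova2001fractional}, which gives the standard link
\[
D^{\alpha}_{C}f(t) \;=\; \frac{d}{dt}\bigl(g_{1-\alpha}*(f-f(0))\bigr)(t) \;=\; \partial_{t}^{\alpha}\bigl(f(\cdot)-f(0)\bigr)(t),
\]
valid for $f\in W^{1,1}((0,T))\cap C([0,T])$; here the right-hand side is exactly the operator $\partial_{t}^{\alpha}$ defined in \eqref{CaputoDefinition1}. This identity is the content of formula (1.20) cited in the statement, and its verification is a short convolution computation using $g_{1-\alpha}\in L^{1}_{\mathrm{loc}}$ and the absolute continuity of $f$, so I would simply quote it rather than reprove it.

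Next, with this identification in hand, I would apply Luchko's maximum principle directly at the point $t_{0}$ where $f$ attains its maximum on $[0,T]$: the hypotheses on $f$ are precisely those required there, and the conclusion $D^{\alpha}_{C}f(t_{0})\geq 0$ translates, via the identity above, into $\partial_{t}^{\alpha}(f(t_{0})-f(0))\geq 0$. The minimum case follows immediately by the standard trick of applying the maximum version to $-f$, since both $\partial_{t}^{\alpha}$ and the operation $f\mapsto f(\cdot)-f(0)$ are linear.

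The only subtlety I anticipate is making sure the regularity assumption $f\in W^{1,1}((0,T))\cap C([0,T])$ is strong enough both for Luchko's theorem and for the Caputo/Riemann--Liouville identity to hold pointwise at $t_0$; but this is exactly the class in which Luchko works and in which Bajlekova's identity is stated, so no further hypotheses are needed. The argument is otherwise a two-line reduction rather than a genuine obstacle.
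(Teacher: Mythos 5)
Your proposal is correct and follows exactly the route the paper intends: the paper gives no written proof of this lemma, stating only that it ``could easily be obtained by using Theorem 1 in \cite{Luchko2009218} and formula (1.20) in \cite{bajlekova2001fractional}'', which is precisely your reduction of the Riemann--Liouville derivative of $f(\cdot)-f(0)$ to the Caputo derivative followed by Luchko's maximum principle. The minimum case via applying the result to $-f$ is also the standard intended argument, so there is nothing to add.
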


\begin{definition}\label{BoundaryDefinition1}
Define the following concepts regarding the domain of the solution:
\begin{enumerate}
  \item $Q_{T} := \Omega\times(0,T) \subset \mathbb{R}^{N+1}$.
  \item Lateral boundary of $Q_{T}$: $\partial_{L}Q_{T}:=\partial\Omega\times[0,T]$.
  \item Parabolic boundary of $Q_{T}$: $\partial_{p}Q_{T} := (\Omega\times\{0\}) \cup \partial_{L}Q_{T}$.
\end{enumerate}
\end{definition}

\begin{theorem}\label{MaximumPrincipleClassicalSol1}
Let $\Omega \subset \mathbb{R}^{N}$ to be a bounded domain, and let $u(x,t)$ be a function that is $C^{2}$ in $x$
and $C^{1}$ in $t$ for $(x,t)\in \Omega\times(0,T)$, and continuous in both $x$ and $t$ for $(x,t) \in \bar{\Omega}\times[0,T]$;
and $u$ is a solution of equation (\ref{TimeSpaceEquation1}) with $f \geq 0$ in $\bar{Q}_{T}$, and
$u_{0} \geq 0$ in $\Omega$. Then $u \geq 0$ in $\bar{Q}_{T}$.
\end{theorem}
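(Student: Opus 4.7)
I would argue by contradiction: assume $u$ is negative somewhere in $\bar Q_T$ and derive an inconsistency with the equation at an interior minimum point. Since $u$ is continuous on the compact set $\bar{\Omega}\times[0,T]$, it attains its global minimum $m := \min_{\bar Q_T} u$. If $m \geq 0$ we are done, so suppose $m < 0$. On the parabolic boundary $\partial_p Q_T$ we have $u \geq 0$: indeed $u(x,0)=u_0(x)\geq 0$ on $\Omega$, and continuity of $u$ together with $u\equiv 0$ on $\mathbb{R}^N\setminus\Omega$ forces $u=0$ on $\partial\Omega\times[0,T]$. Hence the minimum is actually attained at some interior point $(x_0,t_0)\in\Omega\times(0,T]$ with $u(x_0,t_0)=m<0$.

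At such a point I would evaluate the two operators separately. For the fractional Laplacian, use the singular-integral definition \eqref{DefinitionFractionalLaplace}:
\begin{align*}
(-\Delta)^{\beta}u(x_0,t_0)=c_{N,\beta}\int_{\mathbb{R}^{N}}\frac{u(x_0,t_0)-u(y,t_0)}{|x_0-y|^{N+2\beta}}\,dy.
\end{align*}
The integrand is $\leq 0$ for $y\in\Omega$ since $(x_0,t_0)$ is a global minimum, and it is strictly negative for $y\in\mathbb{R}^N\setminus\Omega$ because there $u(y,t_0)=0>m=u(x_0,t_0)$; the singular part near $x_0$ is integrable because $u(\cdot,t_0)\in C^2$ near $x_0$ and $\beta<1$. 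Therefore $(-\Delta)^{\beta}u(x_0,t_0)<0$ strictly. For the time-fractional term, the slice $\varphi(t):=u(x_0,t)$ lies in $W^{1,1}(0,T)\cap C([0,T])$, attains its minimum on $[0,T]$ at $t=t_0\in(0,T]$, and satisfies $\varphi(0)=u_0(x_0)\geq 0>\varphi(t_0)$. Applying Lemma~\ref{CaputoMaximumLemma} to $\varphi$ yields
\begin{align*}
\partial_{t}^{\alpha}\bigl(u(x_0,t_0)-u_0(x_0)\bigr)=\partial_{t}^{\alpha}\bigl(\varphi(t_0)-\varphi(0)\bigr)\leq 0.
\end{align*}
Adding the two estimates gives $\partial_{t}^{\alpha}(u-u_0)(x_0,t_0)+(-\Delta)^{\beta}u(x_0,t_0)<0$, contradicting the equation because the right-hand side $f(x_0,t_0)\geq 0$. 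Hence no such negative minimum exists and $u\geq 0$ on $\bar Q_T$.

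\textbf{Where the difficulty sits.} The only delicate step is the strict negativity of $(-\Delta)^{\beta}u(x_0,t_0)$; a mere ``$\leq 0$'' would not suffice, since Lemma~\ref{CaputoMaximumLemma} also only gives ``$\leq 0$'' for the time piece, and then the sum could vanish. What rescues us is the nonlocal nature of the fractional Laplacian combined with the exterior condition $u\equiv 0$ on $\mathbb{R}^N\setminus\Omega$: the contribution from outside $\Omega$ is strictly negative whenever the interior minimum is negative. This is precisely the feature that makes the argument work even without a Hopf-type boundary lemma, and it is the one place where I would need to check integrability of the singular kernel near $x_0$ carefully, using the $C^2$ regularity of $u$ in $x$ to handle the principal-value singularity.
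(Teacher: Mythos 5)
Your proposal is correct and follows essentially the same route as the paper: contradiction at a negative global minimum, Lemma~\ref{CaputoMaximumLemma} giving $\partial_t^{\alpha}(u(x_0,t_0)-u_0(x_0))\leq 0$ for the time slice, and strict negativity of $(-\Delta)^{\beta}u(x_0,t_0)$ forced by the nonlocal integral together with the zero exterior condition. The only difference is that the paper first works on $\bar{Q}_{T'}$ for $T'<T$ and then lets $T'\to T$ by continuity (a small device to keep the time slice in $W^{1,1}\cap C$ up to the final time, since $u$ is only assumed $C^1$ in $t$ on the open interval), whereas you apply the lemma directly at $t_0\in(0,T]$; this is a minor technical point you could patch the same way.
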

\begin{proof}
Consider $0 < T' < T$, and $\bar{Q}_{T'}$, and let us argue by contradiction.
Assume $u < 0$ somewhere in $\bar{Q}_{T'}$. Because $u \in C(\bar{Q}_{T'})$, and $\bar{Q}_{T'}$ compact,
there exist $(x_{0},t_{0}) \in \bar{Q}_{T'}$ such that $u(x_{0},t_{0}) = \min_{\bar{Q}_{T'}}u < 0$.
Since $u \geq 0$ in $\partial_{p}\bar{Q}_{T'} \subset \partial_{p}\bar{Q}_{T}$,
we have $(x_{0},t_{0}) \notin \partial_{p}\bar{Q}_{T'}$.

No matter $(x_{0},t_{0}) \in Q_{T'}$ is a minimum or $(x_{0},t_{0}) \in \Omega\times\{T'\}$ is a minimum,
we know that $\partial_{t}^{\alpha}(u(x_{0},t_{0}) - u(x_{0},0)) \leq 0$ from Lemma \ref{CaputoMaximumLemma}.
Because $u(\cdot,t_{0}) \in C^{2}(\Omega)\cap C(\bar{\Omega})$ and is zero outside the domain and
$u$ attains minimum at $(x_{0},t_{0})$, we have
\begin{align}\label{ClassicalMaximumProof1}
(-\Delta)^{\beta}u(x_{0},t_{0}) = c_{n,\beta}\int_{\mathbb{R}^{n}}
\frac{u(x_{0},t_{0}) - u(x,t_{0})}{|x_{0} - x|^{N+2\beta}}dx \leq 0.
\end{align}
If $(-\Delta)^{\beta}u(x_{0},t_{0}) = 0$, then $u(\cdot,t_{0}) = 0$,
which is a contradiction with $u(x_{0},t_{0}) < 0$, therefore $(-\Delta)^{\beta}u(x_{0},t_{0}) < 0$.
But, we have $0 \leq f(x,t) = \partial_{t}^{\alpha}(u(x_{0},t_{0}) - u(x_{0},0)) + (-\Delta)^{\beta}u(x_{0},t_{0}) < 0$.
It is a contradiction. Therefore, $u\geq 0$ in $Q_{T'}$. Now we obtain
$u \geq 0$ in $\bar{Q}_{T'}$ for all $T' < T$. By continuity, $u \geq 0$ in $\bar{Q}_{T}$.
\end{proof}

\begin{theorem}\label{MaximumPrincipleClassicalSol2}
Let $\Omega \subset \mathbb{R}^{n}$ be a bounded domain, $T > 0$ and let $u$ be a function with the same regularity as in
Theorem \ref{MaximumPrincipleClassicalSol1} and Dirichlet (zero) exterior conditions. Then we have the following two assertions
\begin{enumerate}
  \item If $\partial_{t}^{\alpha}(u-u_{0}) + (-\Delta)^{\beta}u \leq 0$ in $\Omega$, $t \in [0,T]$, then
  $\max_{\bar{Q}_{T}}u = \max_{\partial_{p}Q_{T}}u$.
  \item If $\partial_{t}^{\alpha}(u-u_{0}) + (-\Delta)^{\beta}u \geq 0$ in $\Omega$, $t \in [0,T]$, then
  $\min_{\bar{Q}_{T}}u = \min_{\partial_{p}Q_{T}}u$.
\end{enumerate}
\end{theorem}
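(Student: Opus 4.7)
The natural strategy is to adapt the contradiction argument used in Theorem \ref{MaximumPrincipleClassicalSol1}: in (1), I assume the maximum of $u$ over $\bar{Q}_T$ strictly exceeds its maximum over $\partial_p Q_T$, locate an interior maximizer, and derive a strict sign violation of the hypothesized inequality. Assertion (2) follows by applying (1) to $-u$, which also vanishes outside $\Omega$ and satisfies $\partial_t^{\alpha}(-u - (-u_0)) + (-\Delta)^{\beta}(-u) \leq 0$.

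For (1), I would fix $T'\in(0,T)$ so that the argument takes place where $u$ is $C^1$ in $t$, and suppose toward contradiction that $M := \max_{\bar{Q}_{T'}} u > \max_{\partial_p Q_{T'}} u$. Since $u$ is continuous on $\bar{Q}_T$ and vanishes on $\mathbb{R}^{N}\setminus\Omega$, continuity forces $u\equiv 0$ on $\partial_{L}Q_{T'}$, so $\max_{\partial_p Q_{T'}} u \geq 0$ and hence $M > 0$. Compactness of $\bar Q_{T'}$ yields a maximizer $(x_0,t_0)$ with $x_0\in\Omega$ and $t_0\in(0,T']$.

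At $(x_0,t_0)$ I would establish two one-sided bounds. First, since $u(\cdot,t_0)\leq M = u(x_0,t_0)$ pointwise on all of $\mathbb{R}^N$ (using the Dirichlet exterior condition together with $M>0$), the integral representation \eqref{DefinitionFractionalLaplace} gives $(-\Delta)^{\beta} u(x_0,t_0) \geq 0$; equality would force $u(\cdot,t_0)\equiv M > 0$ on $\mathbb{R}^N$, contradicting the exterior condition, so the inequality is \emph{strict}: $(-\Delta)^{\beta} u(x_0,t_0) > 0$. Second, the function $t\mapsto u(x_0,t)$ attains its maximum over $[0,T']$ at $t_0\in(0,T']$, so Lemma \ref{CaputoMaximumLemma} (maximum version) gives $\partial_t^{\alpha}(u(x_0,t_0)-u(x_0,0))\geq 0$. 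Adding the two yields $\partial_t^{\alpha}(u-u_0)(x_0,t_0)+(-\Delta)^{\beta} u(x_0,t_0) > 0$, contradicting the assumed inequality $\leq 0$ on $\Omega\times[0,T]$. This proves $\max_{\bar Q_{T'}} u = \max_{\partial_p Q_{T'}} u$ for every $T'<T$, and letting $T'\uparrow T$ yields (1) by continuity of $u$.

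\textbf{Expected obstacle.} The delicate step is promoting $(-\Delta)^{\beta}u(x_0,t_0)\geq 0$ to a strict inequality, since Lemma \ref{CaputoMaximumLemma} contributes only the non-strict bound $\partial_t^{\alpha}(u-u_0)(x_0,t_0)\geq 0$ on the time side. Strictness on the spatial side requires that $M>0$, which is precisely where the Dirichlet exterior condition enters in an essential way: without it, the comparison $u(x_0,t_0)\geq u(x,t_0)$ would only hold on $\bar\Omega$, not on $\mathbb{R}^N$, and the integrand in \eqref{DefinitionFractionalLaplace} could not be signed. The remaining care is to ensure $t_0>0$ so that Lemma \ref{CaputoMaximumLemma} applies; this is automatic because $t_0=0$ would put $(x_0,t_0)$ on $\partial_p Q_{T'}$.
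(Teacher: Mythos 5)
Your proof is correct and follows essentially the same route as the paper: both reduce the claim to the impossibility of an interior (or terminal-time) extremum violating the parabolic boundary, using Lemma \ref{CaputoMaximumLemma} for the non-strict time bound and the Dirichlet exterior condition to make the sign of $(-\Delta)^{\beta}u$ at the extremum strict. The only cosmetic difference is that the paper proves assertion (2) directly by invoking the argument of Theorem \ref{MaximumPrincipleClassicalSol1}, whereas you prove (1) directly and obtain (2) from the symmetry $u\mapsto -u$; your version has the small merit of making explicit the step ($M>0$, hence strictness of the fractional Laplacian inequality) that the paper leaves to ``could be proved similarly.''
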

\begin{proof}
We only prove the second result, the first one could be proved similarly.
If $u(x,0) \geq 0$, then we use Theorem \ref{MaximumPrincipleClassicalSol1} to see $u \geq 0$ in $\bar{Q}_{T}$,
and since $\partial_{p}Q_{T}\subset \bar{Q}_{T}$ and $u|_{\partial_{p}Q_{T}} = 0$, $\min_{\bar{Q}_{T}}u = \min_{\partial_{p}Q_{T}}u = 0$.
Otherwise, we assume that $u \geq 0$ not hold everywhere in $Q_{T}$, so there exists $(x_{0},t_{0}) \in \bar{Q}_{T}$
such that $\min_{\bar{Q}_{T}}u = u(x_{0},t_{0}) < 0$. By the proof of Theorem \ref{MaximumPrincipleClassicalSol1},
it is not possible that there exists a negative minimum in $Q_{T}\cup (\Omega\times\{T\})$, therefore,
the minimum in $\bar{Q}_{T}$ must be in $\partial_{p}Q_{T}$.
\end{proof}

\section{Maximum Principle for Weak Supersolution}

For convenience, denote $H_{e}^{s}(\Omega)$ ($s\in \mathbb{R}$) as follow
\begin{align}\label{DefineHExteriorSobolev1}
H_{e}^{s}(\Omega) := \left\{ u \in W^{s,2}(\mathbb{R}^{N}) \, : \, u = 0 \text{ in }\mathbb{R}^{N}\backslash\Omega \right\},
\end{align}
and $L_{e}^{p}(\Omega)$ ($1 \leq p \leq \infty$) as
\begin{align}\label{DefineLExteroriLegesgue1}
L_{e}^{p}(\Omega) := \left\{ u \in L_{e}^{p}(\mathbb{R}^{N}) \, : \, u = 0 \text{ in }\mathbb{R}^{N}\backslash\Omega \right\}.
\end{align}
Denote
\begin{align}\label{SecondForm1}
a(u,v) := \frac{c_{N,\beta}}{2}\int_{\mathbb{R}^{N}}\int_{\mathbb{R}^{N}}\frac{(u(x,t)-u(y,t))(\eta(x,t)-\eta(y,t))}{|x-y|^{N+2\beta}}dxdy.
\end{align}
We say that a function $u$ is a weak supersolution of (\ref{TimeSpaceEquation1}) in $Q_{T}$ with $f \in L^{\infty}(Q_{T})$
and $u_{0} \in L_{e}^{2}(\Omega)$, if $u$ belongs to the space
\begin{align*}
\begin{split}
V_{p} := & \Big\{ u \in L^{2p}([0,T];L_{e}^{2}(\Omega)) \cap L^{2}([0,T];H_{e}^{\beta}(\Omega)) \\
& \text{ such that }g_{1-\alpha}*(u-u_{0}) \in C([0,T];L_{e}^{2}(\Omega)), \text{ and }(g_{1-\alpha}*(u-u_{0}))|_{t=0} = 0 \Big\},
\end{split}
\end{align*}
and for any nonnegative test function
\begin{align}\label{TestFun1}
\eta \in H_{e}^{1,\beta}(Q_{T}) := W^{1,2}([0,T];L_{e}^{2}(\Omega)) \cap L^{2}([0,T];H_{e}^{\beta}(\Omega))
\end{align}
with $\eta|_{t=T} = 0$ there holds
\begin{align}\label{WeakFormulation1}
\begin{split}
\int_{0}^{T}\int_{\Omega}-\eta_{t}\left[ g_{1-\alpha}*(u-u_{0}) \right]dxdt
+ \int_{0}^{T}a(u,\eta)dt
\geq \int_{0}^{T}\int_{\Omega}f\eta dxdt.
\end{split}
\end{align}
We could provide an equivalent weak formulation of (\ref{TimeSpaceEquation1}) where kernel $g_{1-\alpha}$ is replaced
by a more regular kernel $g_{1-\alpha,m}$($m\in\mathbb{N}$).
For the detailed definition of $g_{1-\alpha,m}$, we refer to Section 2 in \cite{Zacher2008137}.
We could also introduce a function $h_{m}$ which satisfy $g_{1-\alpha,m} = g_{1-\alpha}*h_{m}$ with ``$*$'' represents the convolution operator.
For concisely, we only provide some important properties of functions $g_{1-\alpha,m}$ and $h_{m}$ as follows
\begin{align}\label{propertiesApp}
\begin{split}
& g_{1-\alpha,m} \in W^{1,1}([0,T]), \quad g_{1-\alpha,m} \rightarrow g_{1-\alpha} \text{ in }L^{1}([0,T]) \text{ as }m\rightarrow \infty, \\
& g_{1-\alpha,m} \text{ and }h_{m} \text{ are all nonnegative functions for every }m \in \mathbb{N}, \\
& \text{If }f\in L^{p}([0,T],X), 1\leq p<\infty, \text{ there holds }h_{m}*f \rightarrow f \text{ in }L^{p}([0,T],X),
\end{split}
\end{align}
where $X$ represents a Banach space.
Now we could show another definition of weak solution which is equivalent to equation (\ref{WeakFormulation1}).
\begin{lemma}\label{WeakEquivalent1}
Let $u \in V_{p}$ is a weak supersolution of equation (\ref{TimeSpaceEquation1}) if and only if for any nonnegative function
$\psi \in H_{e}^{\beta}(\Omega)$ one has
\begin{align}\label{WeakFormulation2}
\begin{split}
&\int_{\Omega}\psi \partial_{t}\left[ g_{1-\alpha,m}*(u-u_{0}) \right]dx
+ a(h_{m}*u,\psi)   \\
&\quad\quad\quad\quad\quad\quad\quad\quad\quad\quad\,\,\,\,
\geq \int_{\Omega}(h_{m}*f)\psi dx \, \text{ a.e.}\,\,t\in (0,T), \, m\in\mathbb{N}.
\end{split}
\end{align}
\end{lemma}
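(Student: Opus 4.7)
\medskip
\noindent\emph{Proof proposal.} My plan is to prove both implications by exploiting the factorization $g_{1-\alpha,m}=g_{1-\alpha}*h_{m}$ from (\ref{propertiesApp}) together with the three approximation properties of $h_{m}$ and $g_{1-\alpha,m}$ listed there.

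For the forward direction, I would take an arbitrary nonnegative $\phi\in C_{c}^{\infty}((0,T))$ and substitute the test function
\[
\eta(x,t):=\psi(x)\Phi(t),\qquad \Phi(t):=\int_{t}^{T} h_{m}(s-t)\phi(s)\,ds,
\]
into (\ref{WeakFormulation1}). This $\eta$ is nonnegative, satisfies $\eta|_{t=T}=0$, and belongs to $H_{e}^{1,\beta}(Q_{T})$ because $h_{m}\in W^{1,1}([0,T])$ and $\phi$ is smooth with compact support. A Fubini argument identifies $\int_{0}^{T}\Phi(t)G(t)\,dt$ with $\int_{0}^{T}\phi(s)(h_{m}*G)(s)\,ds$ for reasonable $G$, so the bilinear-form term becomes $\int_{0}^{T}\phi(t)\,a(h_{m}*u,\psi)\,dt$ and the right-hand side becomes $\int_{0}^{T}\phi(t)\int_{\Omega}(h_{m}*f)\psi\,dx\,dt$. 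For the time term, an integration by parts in $t$ (using $\Phi(T)=0$ and $(g_{1-\alpha}*(u-u_{0}))|_{t=0}=0$) rewrites $\int_{0}^{T}\!\!\int_{\Omega}-\eta_{t}[g_{1-\alpha}*(u-u_{0})]\,dx\,dt$ as $\int_{0}^{T}\phi(t)\int_{\Omega}\psi\,\partial_{t}[g_{1-\alpha,m}*(u-u_{0})]\,dx\,dt$ after recognising $h_{m}*g_{1-\alpha}=g_{1-\alpha,m}$. Since $\phi\geq 0$ is arbitrary, the resulting inequality in $\phi$ yields (\ref{WeakFormulation2}) for a.e.\ $t\in(0,T)$.

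For the reverse direction, I would multiply (\ref{WeakFormulation2}) by an arbitrary nonnegative $\phi\in C_{c}^{\infty}((0,T))$, integrate over $(0,T)$, and integrate by parts in $t$ in the first term, using $g_{1-\alpha,m}\in W^{1,1}$ together with $(g_{1-\alpha,m}*(u-u_{0}))|_{t=0}=0$ (inherited from the corresponding property for $g_{1-\alpha}$ via $g_{1-\alpha,m}=h_{m}*g_{1-\alpha}$). Then I would let $m\to\infty$: by (\ref{propertiesApp}) one has $g_{1-\alpha,m}*(u-u_{0})\to g_{1-\alpha}*(u-u_{0})$ in $L^{2}([0,T];L_{e}^{2}(\Omega))$, $h_{m}*u\to u$ in $L^{2}([0,T];H_{e}^{\beta}(\Omega))$, and $h_{m}*f\to f$ in $L^{1}(Q_{T})$, which is enough to pass to the limit in every term. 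This produces (\ref{WeakFormulation1}) for separated test functions $\eta(x,t)=\phi(t)\psi(x)$, and the general statement follows by a density argument, since tensor products of this form are dense in $\{\eta\in H_{e}^{1,\beta}(Q_{T}):\eta|_{t=T}=0\}$.

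The principal obstacle I expect is the limit $a(h_{m}*u,\psi)\to a(u,\psi)$ as $m\to\infty$, which demands convergence of $h_{m}*u$ to $u$ in the Bochner space $L^{2}([0,T];H_{e}^{\beta}(\Omega))$; this is precisely the third line of (\ref{propertiesApp}) invoked with $X=H_{e}^{\beta}(\Omega)$, and it is what forces the definition of $V_{p}$ to include $L^{2}([0,T];H_{e}^{\beta}(\Omega))$. A secondary technicality is justifying the integration by parts with the regularized kernel and identifying the vanishing initial trace, but these follow cleanly from $g_{1-\alpha,m}=h_{m}*g_{1-\alpha}$, the nonnegativity of $h_{m}$, and the membership properties built into $V_{p}$.
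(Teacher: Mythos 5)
Your proposal is correct and follows essentially the same route as the paper: both implications hinge on the adjoint identity for the time-reversed convolution with $h_{m}$ together with the factorization $g_{1-\alpha,m}=g_{1-\alpha}*h_{m}$ and the approximation properties in (\ref{propertiesApp}). The only cosmetic difference is that you localize in time with a smooth bump $\phi$ and recover general test functions via tensor-product density, where the paper instead specializes to $\varphi=\chi_{(t_{1},t_{2})}\psi$ and invokes Lebesgue differentiation in one direction and substitutes $\psi=\eta(\cdot,t)$ directly in the other; both variants carry the same standard technical overhead.
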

\begin{proof}
The `if' part is readily seen as follows. Given an arbitrary nonnegative $\eta \in H_{e}^{1,\beta}(Q_{T})$ satisfying $\eta|_{t=T} = 0$,
we take in (\ref{WeakFormulation2}) $\psi(x) = \eta(t,x)$ for any fixed $t\in (0,T)$, integrate from $t = 0$ to $t = T$,
and integrate by parts with respect to the time variable. Then by using the approximating properties of the kernels $h_{m}$,
we obtain (\ref{WeakFormulation1}).
To show the `only-if' part, we choose the test function
\begin{align}\label{TestFun2}
\eta(x,t) = \int_{t}^{T}h_{m}(\sigma - t)\varphi(\sigma,x)d\sigma = \int_{0}^{T-t}h_{m}(\sigma)\varphi(\sigma+t,x)d\sigma,
\end{align}
with arbitrary $m\in\mathbb{N}$ and nonnegative $\varphi\in H_{e}^{1,\beta}(Q_{T})$ satisfying $\varphi|_{t=T} = 0$;
$\eta$ is a nonnegative since $\varphi$ and $h_{m}$ are both nonnegative functions.
%In the following, we will use the following Fubini's theorem frequently
%\begin{align}\label{FubiniTheorem}
%\begin{split}
%&\int_{0}^{T}\left( \int_{t}^{T}h_{m}(\sigma-t)\psi_{1}(\sigma)d\sigma \right)\psi_{2}(t)dt \\
%&\quad\quad\quad\quad\quad\quad\quad\quad\quad
%= \int_{0}^{T}\left( \int_{0}^{t}h_{m}(\sigma-t)\psi_{2}(\sigma)d\sigma \right)\psi_{1}(t)dt,
%\end{split}
%\end{align}
%for all $\psi_{1},\psi_{2}\in L^{2}([0,T])$.
For the first term in (\ref{WeakFormulation1}), it can be transformed to
\begin{align}\label{EquivalentTran1}
\int_{0}^{T}\int_{\Omega}-\varphi_{t}\left[ g_{1-\alpha,m}*(u-u_{0}) \right] dxdt,
\end{align}
where we used $g_{1-\alpha,m} = g_{1-\alpha}*h_{m}$ and the Fubini's theorem.
For the term $\int_{0}^{T}a(u,\eta)dt$, we have
\begin{align*}
\begin{split}
&\int_{0}^{T}a(u,\eta)dt \\
= & \frac{c_{N,\beta}}{2}\int_{0}^{T}\int_{\mathbb{R}^{N}\times\mathbb{R}^{N}}\int_{t}^{T}
h_{m}(\sigma-t)\frac{(u(x,t)-u(y,t))(\varphi(x,\sigma)-\varphi(y,\sigma))}{|x-y|^{N+2\beta}}d\sigma dx dy dt \\
= & \frac{c_{N,\beta}}{2}\int_{0}^{T}\int_{\mathbb{R}^{N}}\int_{\mathbb{R}^{N}}
\frac{((h_{m}*u)(x,t)-(h_{m}*u)(y,t))(\varphi(x,t)-\varphi(y,t))}{|x-y|^{N+2\beta}}dxdydt \\
= & \int_{0}^{T}a(h_{m}*u,\varphi)dt.
\end{split}
\end{align*}
Observe that $g_{1-\alpha,m}*(u-u_{0}) \in {_{0}}W{^{1,2}}([0,T];L_{e}^{2}(\Omega))$ where $0$ means vanishing at $t=0$. Therefore,
combining (\ref{EquivalentTran1}) and the above equation, then integrating by parts and using $\varphi|_{t = T} = 0$ yields
\begin{align}\label{WeakFormWithT}
\int_{0}^{T}\int_{\Omega}\varphi \partial_{t}\left[ g_{1-\alpha,m}*(u-u_{0}) \right]dx
+ a(h_{m}*u,\varphi)dt \geq \int_{0}^{T}\int_{\Omega}(h_{m}*f)\varphi dxdt,
\end{align}
for all $m \in \mathbb{N}$ and $\varphi \in H_{e}^{1,\beta}(Q_{T})$ with $\varphi|_{t=T} = 0$.
By means of a simple approximation argument, we obtain that (\ref{WeakFormWithT}) holds true for any $\varphi$
of the form $\varphi(x,t) = \chi_{(t_{1},t_{2})}\psi(x)$ where $\chi_{(t_{1},t_{2})}$ denotes the characteristic
function of the time-interval $(t_{1},t_{2})$, $0<t_{1}<t_{2}<T$ and $\psi \in H_{e}^{\beta}(\Omega)$ is nonnegative.
Appealing to the Lebesgue's differentiation theorem \cite{grafakos2014classical},
the proof is complete.
\end{proof}

Now, we prove the maximum principle for the weak supersolution of (\ref{TimeSpaceEquation1}).
\begin{theorem}\label{WeakMaximumParabolic1}
Let $\Omega \subset \mathbb{R}^{N}$ be a bounded domain, $T > 0$, and $u$ a weak supersolution of problem (\ref{TimeSpaceEquation1})
with $u_{0} \geq 0$ a.e. in $\Omega$ and $f \geq 0$ a.e. in $\Omega \times [0,T]$. Then $u \geq 0$ a.e. in $\mathbb{R}^{N}\times[0,T]$.
\end{theorem}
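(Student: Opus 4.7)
The plan is to derive a differential inequality for $Y(t) := \|u^-(\cdot,t)\|_{L^2(\Omega)}^2$ involving the regularized kernel $g_{1-\alpha,m}$ from Lemma \ref{WeakEquivalent1}, and then pass to the limit $m\to\infty$ to force $Y$ to vanish identically.

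First I would fix $t \in (0,T)$ and use $\psi(x) := u^-(x,t)$ as the test function in the equivalent weak formulation (\ref{WeakFormulation2}). This is admissible because the truncation $v\mapsto v^-=\max(-v,0)$ preserves $W^{\beta,2}(\mathbb{R}^N)$-regularity (the Gagliardo seminorm is non-increasing under truncation) and the zero exterior condition, so $u^-(\cdot,t)\in H_e^\beta(\Omega)$ for a.e.\ $t$, and it is clearly nonnegative. Then I would split the time-fractional term using that $u_0$ is time-independent:
\[
\partial_t\bigl[g_{1-\alpha,m}*(u-u_0)\bigr] = \partial_t(g_{1-\alpha,m}*u) - u_0\, g_{1-\alpha,m}(t).
\]
Applying (\ref{AppendexTimeFractional3}) with $k=g_{1-\alpha,m}$ pointwise in $x$ and integrating over $\Omega$ (Fubini, justified by the $V_p$-regularity of $u$) gives
\[
\int_\Omega u^-\,\partial_t(g_{1-\alpha,m}*u)\,dx \leq -\tfrac12\,\partial_t(g_{1-\alpha,m}*Y)(t).
\]
The hypotheses $u_0\geq 0$, $f\geq 0$, together with the nonnegativity of $g_{1-\alpha,m}$ and $h_m$ from (\ref{propertiesApp}), make both $\int_\Omega u^- u_0\, g_{1-\alpha,m}(t)\,dx$ and $\int_\Omega (h_m*f)\,u^-\,dx$ nonnegative; moving them to the favorable side of (\ref{WeakFormulation2}) yields the pointwise-in-$t$ inequality
\[
\tfrac12\,\partial_t(g_{1-\alpha,m}*Y)(t) \leq a\bigl(h_m*u(\cdot,t),\,u^-(\cdot,t)\bigr) \quad\text{for a.e.\ }t\in(0,T).
\]

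For the right-hand side I would record the standard fractional-Laplacian inequality $a(v,v^-)\leq 0$ valid for every $v\in H_e^\beta(\Omega)$, which follows by a routine case analysis on the sign of the Gagliardo-type integrand $(v(x)-v(y))(v^-(x)-v^-(y))$. Combined with the convergence $h_m*u\to u$ in $L^2([0,T];H_e^\beta(\Omega))$ from (\ref{propertiesApp}) and the continuity of $a$, this gives $\int_0^{T'}a(h_m*u,u^-)\,dt \to \int_0^{T'}a(u,u^-)\,dt \leq 0$ for every $T'\in(0,T)$. Integrating the differential inequality in $t$, using $(g_{1-\alpha,m}*Y)(0)=0$, and then sending $m\to\infty$ with $g_{1-\alpha,m}\to g_{1-\alpha}$ in $L^1([0,T])$ forces $(g_{1-\alpha}*Y)(T')\leq 0$ for a.e.\ $T'$. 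Since $g_{1-\alpha}\geq 0$ and $Y\geq 0$ the convolution is also nonnegative, hence it vanishes identically; the strict positivity of $g_{1-\alpha}$ on $(0,T)$ then implies $Y\equiv 0$, i.e.\ $u^-=0$ a.e.\ in $\Omega\times(0,T)$, which combined with the exterior condition gives the claim on all of $\mathbb{R}^N\times[0,T]$.

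The main technical obstacle will be making the passage $m\to\infty$ rigorous on both sides of the integrated inequality: the spatial side requires $h_m*u\to u$ strongly enough in $L^2([0,T];H_e^\beta(\Omega))$ (possibly after extracting a subsequence) to pass through the bilinear form $a$, while the time side needs Young's inequality together with the $L^1$-convergence of the kernels to handle $g_{1-\alpha,m}*Y$ at a.e.\ $T'$. A minor subtlety is justifying the substitution of the $t$-dependent test function $\psi(x)=u^-(x,t)$ into (\ref{WeakFormulation2}), which I would handle via a density argument in $H_e^\beta(\Omega)$ so that a single null $t$-set suffices for all admissible test functions.
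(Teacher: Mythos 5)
Your proposal is correct and draws on exactly the same toolbox as the paper --- testing with $u^-$, the truncation inequality (\ref{AppendexTimeFractional3}) applied to the regularized kernel $g_{1-\alpha,m}$, the sign analysis $a(u,u^-)\le a(u^+,u^-)\le 0$, discarding the $u_0$- and $f$-terms by nonnegativity, and the approximation properties (\ref{propertiesApp}) --- but the logical packaging is genuinely different. The paper integrates over the full interval $[0,T]$, throws away the term $-\tfrac12\int_\Omega(g_{1-\alpha,m}*(u^-)^2)(x,T)\,dx\le 0$, and runs a contradiction argument resting on the \emph{strict} inequality $\int_0^T a(u,u^-)\,dt<0$, which in turn needs the observation that $\int_0^T a(u^-,u^-)\,dt>0$ whenever $u^-\not\equiv0$ (true because $u^-$ vanishes outside the bounded set $\Omega$ and so cannot be a nonzero constant). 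You instead promote that discarded term to the protagonist: integrating to a variable endpoint $T'$ gives $\tfrac12(g_{1-\alpha,m}*Y)(T')\le\int_0^{T'}a(h_m*u,u^-)\,dt$ with $Y(t)=\|u^-(\cdot,t)\|_{L^2(\Omega)}^2$, and after letting $m\to\infty$ you read off $(g_{1-\alpha}*Y)(T')\le0$, hence $Y\equiv0$ from the positivity of the kernel. This direct route needs only the non-strict bound $a(u,u^-)\le0$ and even yields the slightly quantitative conclusion $g_{1-\alpha}*\|u^-\|_{L^2}^2\equiv0$; the price is the extra care you already flag in substituting the $t$-dependent test function $u^-(\cdot,t)$ into the pointwise formulation (\ref{WeakFormulation2}) outside a single null set, a step the paper sidesteps by working with the time-integrated form (\ref{WeakFormWithT}) --- at the cost of its own unstated approximation of $u^-$ by admissible space-time test functions, since $u^-$ need not lie in $H_e^{1,\beta}(Q_T)$ nor vanish at $t=T$. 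Both arguments are sound, and the technical obstacles you identify are the right ones.
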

\begin{proof}
We proceed by a contradiction argument. Taking $\varphi$ in (\ref{WeakFormWithT}) to be $u^{-}$, the negative part of $u$.
Suppose $u^{-}$ is nonzero in a set of positive measure. We know that
\begin{align}\label{WeakMaxProof1}
\begin{split}
\int_{0}^{T}\int_{\Omega}u^{-}\partial_{t}\left[ k_{m}*(u-u_{0}) \right]dx
+ & a(h_{m}*u,u^{-})\, dt \\
& \geq \int_{0}^{T}\int_{\Omega}(h_{m}*f)u^{-} dxdt.
\end{split}
\end{align}
Let us first analyze the second term on the left hand side of (\ref{WeakMaxProof1}).
Because $h_{m}*u \rightarrow u$ in $L^{2}([0,T];L_{e}^{2}(\Omega))$ as $m\rightarrow \infty$, we could deduce that
$\int_{0}^{T}a(h_{m}*u,u^{-})\,dt \rightarrow \int_{0}^{T}a(u,u^{-})\,dt \quad \text{as }m\rightarrow \infty.$
%\end{align*}
From
\begin{align*}
&\quad\quad\quad\quad
\int_{0}^{T}a(u,u^{-})dt = \int_{0}^{T}a(u^{+},u^{-})dt - \int_{0}^{T}a(u^{-},u^{-})dt, \\
&\int_{0}^{T}a(u^{-},u^{-})dt = \frac{c_{N,\beta}}{2}\int_{0}^{T}\int_{\mathbb{R}^{N}}\int_{\mathbb{R}^{N}}
\frac{(u^{-}(x,t) - u^{-}(y,t))^{2}}{|x-y|^{N+2\beta}}dxdydt > 0,
\end{align*}
we find that
\begin{align*}
\int_{0}^{T}a(u,u^{-})dt < \int_{0}^{T}a(u^{+},u^{-})dt.
\end{align*}
Noticing that $(u^{+}(x,t) - u^{+}(y,t))(u^{-}(x,t) - u^{-}(y,t)) \leq 0$, we obtain
\begin{align}\label{WeakMaxProof2}
\int_{0}^{T}a(u,u^{-})dt < \int_{0}^{T}a(u^{+},u^{-})dt \leq 0.
\end{align}
Hence, there exists a large positive number $M > 0$ such that if $m \geq M$, we have
\begin{align}\label{WeakMaxProof3}
\int_{0}^{T}a(h_{m}*u,u^{-})dt < 0.
\end{align}
For the first term on the left hand side of (\ref{WeakMaxProof1}), we have
\begin{align*}
\int_{0}^{T}\int_{\Omega}u^{-}\partial_{t} & \left[ g_{1-\alpha,m}*(u-u_{0}) \right]dxdt \\
& = \int_{0}^{T}\int_{\Omega}u^{-}\partial_{t}\left[ g_{1-\alpha,m}*u\right]dxdt
- \int_{0}^{T}\int_{\Omega}u^{-}g_{1-\alpha,m}u_{0}dxdt.
\end{align*}
Noticing that the second term on the righthand side is bigger than or equal to zero, we infer that
\begin{align}\label{WeakMaxProof4}
\int_{0}^{T}\int_{\Omega}u^{-}\partial_{t}\left[ g_{1-\alpha,m}*(u-u_{0}) \right]dxdt
\leq \int_{0}^{T}\int_{\Omega}u^{-}\partial_{t}\left[ g_{1-\alpha,m}*u\right]dxdt.
\end{align}
Using formula (\ref{AppendexTimeFractional3}), we obtain
\begin{align}\label{WeakMaxProof5}
\int_{0}^{T}\int_{\Omega}u^{-}\partial_{t}\left[ g_{1-\alpha,m}*u\right]dxdt \leq
-\frac{1}{2}\int_{\Omega}(g_{1-\alpha,m}*(u^{-})^{2})(x,T)dx \leq 0.
\end{align}
From (\ref{WeakMaxProof4}) and (\ref{WeakMaxProof5}), we conclude that
\begin{align}\label{WeakMaxProof6}
\int_{0}^{T}\int_{\Omega}u^{-}\partial_{t}\left[ g_{1-\alpha,m}*(u-u_{0}) \right]dxdt \leq 0 \quad \text{for }m\in \mathbb{N}.
\end{align}
Considering (\ref{WeakMaxProof3}) and (\ref{WeakMaxProof6}), for sufficiently large $m$, we deduce that
\begin{align}\label{WeakMaxProof7}
\begin{split}
\int_{0}^{T}\int_{\Omega}u^{-}\partial_{t}\left[ g_{1-\alpha,m}*(u-u_{0}) \right]dx
+ a(h_{m}*u,u^{-})\, dt < 0
\end{split}
\end{align}
Since $f \geq 0$ a.e. on $Q_{T}$, $u^{-} \geq 0$ a.e. on $Q_{T}$ and $g_{1-\alpha,m} \geq 0$ on $(0,T)$, we obtain
\begin{align*}
\int_{0}^{T}\int_{\Omega}(h_{m}*f)u^{-} dxdt \geq 0,
\end{align*}
which contradicts to (\ref{WeakMaxProof1}) and (\ref{WeakMaxProof7}).
Therefore, $u \geq 0$ a.e. in $\mathbb{R}^{N}\times[0,T]$.
\end{proof}

\section{Acknowledgements}

This work was partially supported by the National Natural Science Foundation of China under grant no. 11501439 and
the postdoctoral science foundation project of China under grant no. 2015M580826.

%The authors wish to thank the anonymous reviewers for their helpful
%and insightful comments and suggestions.

\bibliographystyle{siamplain}
\bibliography{reference}

\end{document}